\newtheorem{theorem}{Theorem}[section]
\newtheorem*{theorem-non}{Theorem}
\newtheorem{lemma}[theorem]{Lemma}
\newtheorem{proposition}[theorem]{Proposition}
\newtheorem*{proposition-non}{Proposition}
\newtheorem{definition}[theorem]{Definition}
\newtheorem*{conjecture-non}{Conjecture}
\theoremstyle{remark}
\newtheorem{nota}[theorem]{}
\newtheorem{question}[theorem]{Question}
\newtheorem{example}[theorem]{Example}
\newtheorem{remark}[theorem]{Remark}
\def\cl{\overline}
\def\R{\mathbb R}
\def\C{\mathbb C}
\def\Q{\mathbb Q}
\def\K{\mathbb K}
\def\Z{\mathbb Z}
\def\N{\mathbb N}
\def\B{\mathcal B}
\def\Id{\operatorname{Id}}
\def\norma{\|\,\cdot\,\|}
\def\val{|\,\cdot\,|}
\def\nX{\|\,\cdot\,\|_X}
\def\nE{\|\,\cdot\,\|_E}
\def\codo{,\ldots, }
\def\ninf{\|\,\cdot\,\|_\infty}
\title{On the concept of non-ultrametric non-Archimedean analysis}
\begin{document}

\author{Javier~Cabello~Sánchez, Francisco~J.~Carmona~Fuertes}
\address{Departamento de Matem\'{a}ticas and Instituto de Matem\'{a}ticas. 
Universidad de Extremadura, Avda. de Elvas s/n, 06006 Badajoz. Spain}
\email{coco@unex.es, fjcf121@gmail.com}

\thanks{Keywords: Ultrametric distance, Mazur-Ulam property, strictly convex normed spaces, 
non-Archimedean analysis, $p$-adic fields.}
\thanks{Mathematics Subject Classification: 26E30, 11E95}

\begin{abstract}
Given some non-Archimedean field $\K$ and some $\K$-linear space $X$, the usual 
way to define a norm over $X$ involves the {\em ultrametric inequality} 
$\|x+y\|\leq\max\{\|x\|,\|y\|\}$. 
In this note we will try to analyse the convenience of considering 
a wider variety of norms. 

The main result of the present note is a characterisation of the isometries 
between finite-dimensional linear spaces over some valued field endowed 
with the norm $\norma_1$, a result that can be seen as {\em the closest to a 
Mazur--Ulam Theorem in non-Archimedean analysis}. 
\end{abstract}

\maketitle

\section{Introduction}

\bigskip

Let $E=(\R^2,\nE)$ be a real normed plane. If $\nE$ comes from a inner product, 
then its unit sphere $S_E$ is an ellipse and so, it is determined by six points. 
This implies that, amongst all the norms that can be defined on $\R^2$ by means 
of a scalar product, $\nE$ is determined by 
$$\|(1,0)\|_E=\|(-1,0)\|_E,\ \ \|(0,1)\|_E=\|(0,-1)\|_E,\ \  
\|(1,1)\|_E=\|(-1,-1)\|_E.$$
With this in mind, the analysis of 2-dimensional real spaces could be summarized 
as {\em all of them are isometrically isomorphic and all of them have a huge amount 
of isometries}. In spite of this, the analysis of two-dimensional real 
normed spaces is a very interesting branch of modern mathematics; for example, 
\cite{SurveyMSW1, SurveyMSW2} are dedicated to the geometry of two-dimensional 
spaces and contain an overwhelming amount of results on these spaces and, moreover, 
the techniques employed to solve the 2-dimensional Tingley's Problem in the 
different cases have nothing in common, see~\cite{Banakh2, TBJCS}. 
This is so because $\R^2$ can be endowed with a huge variety of norms, with very 
different properties.

In $p$-adic analysis, we can find a very similar situation. Take some prime 
number $p\in\N$ and the usual valuation $|\cdot|:\Q_p\to[0,\infty)$ determined by the 
condition $|p|=1/p$. If we consider the linear space $\Q^2_p$ endowed with any 
{\em ultrametric} norm $\norma$ such that $\frac{\|(1,0)\|}{\|(0,1)\|}\not\in|\Q_p|$, 
equivalently, $\frac{\|(1,0)\|}{\|(0,1)\|}\not\in p^\Z$, then 
$\|(a,b)\|=\max\{|a|\|(1,0)\|,|b|\|(0,1)\|\}$ for every $(a,b)\in\Q^2_p$. 
This is so because of the strong triangular inequality 
$$\|(a,b)+(a',b')\|\leq\max\{\|(a,b)\|,\|(a',b')\|\}.$$

If we think about the initial development of the theory of Banach spaces, we 
can say that this theory flourished because of the necessity of studying the 
similarities and differences that $(C(K),\|\cdot\|_\infty)$ has with 
$(\ell_2,\norma_2)$ and, so to say, spaces of the same kind. In some sense, it 
could be said that these two spaces were the reason for mathematicians to 
start studying arbitrary norms over infinite dimensional spaces. 

In spite of this, $p$-adic analysis (and non-Archimedean analysis), seems 
to lack a good reason to develop a more general metric or normed framework. 
In~\cite[Proposition~1.1]{iwahori} it can be seen that, considering ultrametric 
norms, every finite-dimensional normed space $(\Q_p^n,\nX)$ over $\Q_p$ can 
be endowed with a basis $\{x_1,\ldots,x_n\}$ such that 
\begin{equation}\label{Xmax}
\|\lambda_1x_1+\ldots+\lambda_nx_n\|_X=
\max\{|\lambda_1|\|x_1\|_X,\ldots,|\lambda_n|\|x_n\|_X\}
\end{equation}
for every $\lambda_1,\ldots,\lambda_n\in\Q_p.$ The reader can find related results 
in~\cite[Theorem 50.8]{schikhoff} or \cite[p. 67]{vanRooij}. 

This result can be understood as follows: 

Given some $(\K,\val)$ and some $n\in\N$, one may take an $n$-tuple of positive 
real numbers $\alpha_1,\ldots,\alpha_n$ and endow $\K$ with the norms 
$\norma^1=\alpha_1\val,\ldots,\norma^n=\alpha_n\val$. Now we may consider 
$(X,\norma)=\oplus_\infty(\K,\norma^i)$, i.e., $X=\K^n$ and 
$\|(\lambda_1,\ldots,\lambda_n)\|=\max\{\alpha_1|\lambda_1|,\ldots,\alpha_n|\lambda_n|\}$. 
What~\cite[Proposition~1.1]{iwahori} says is that this is all in ultrametric 
analysis. Every $n$-dimensional normed space over a $p$-adic field is 
isometrically isomorphic to this construction. So to say, the only things that 
can vary are $\alpha_1,\ldots,\alpha_n$. 

\subsection{Definitions and preliminary results}

\begin{definition}\label{defVF}
A {\em valued field} is a field $\K$ equipped with a function
$|\,\cdot\,|:\K\to[0,\infty)$, called a {\em valuation}, such that
\begin{enumerate}
\item[i)] $|\lambda | = 0$ if and only if $\lambda = 0$,
\item[ii)] $|\lambda \mu| = |\lambda| |\mu|$,
\item[iii)] $|\lambda + \mu | \leq |\lambda |+ |\mu|$ for all $ \lambda , \mu \in \K$.
\end{enumerate}
\end{definition}

\begin{definition}\label{defNAF}
A {\em non-Archimedean} {valued field} is a valued field $(\K,\val)$ such that, 
for every $\alpha\in\K$, the map $\N\to[0,\infty)$ defined as $n\mapsto|n\alpha|$ 
is bounded. 
\end{definition}

\begin{definition}\label{defUMF}
An {\em ultrametric} {valued field} is a valued field $(\K,\val)$ such that 
\begin{enumerate}
\item[iii$'$)] $|\lambda + \mu | \leq max\{|\lambda |, |\mu|\}$ for all $ \lambda , \mu \in \K$. 
\end{enumerate}
\end{definition}

The following result, Ostrowski's Theorem, implies that 
every non-Archimedean valuation is actually ultrametric.

\begin{theorem}[\cite{schikhoff},10.1]\label{Ostrowski}
Each non-trivial valuation on the field of the rational numbers is equivalent 
either to the absolute value function or to some $p$-adic valuation.
\end{theorem}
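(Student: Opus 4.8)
The plan is to run the classical dichotomy argument, first recording the elementary normalisations and then splitting into the Archimedean and the ultrametric case; recall that two valuations are called \emph{equivalent} when one is a positive real power of the other (equivalently, they define the same topology). Start by noting $|1|=1$ (since $|1|^2=|1|\neq 0$), hence $|{-1}|=1$ and $|{-x}|=|x|$, and $|m|\le m$ for every $m\in\N$ by applying the triangle inequality to $m=1+\cdots+1$. Because $\val$ is non-trivial there is a rational with $|x|\neq 1$, and writing $x$ as a quotient of integers forces some $n\in\Z$ with $|n|\neq 1$; in particular $|n|<1$ for this $n$ unless some integer has valuation $>1$. So either (a) $|n_0|>1$ for some $n_0\in\N$, or (b) $|n|\le 1$ for all $n\in\Z$.

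In the Archimedean case (a), fix $n_0\in\N$ with $|n_0|>1$ and set $\alpha=\log|n_0|/\log n_0>0$. For arbitrary $m\in\N$ I would expand $m$ in base $n_0$, write $m=\sum_{i=0}^{k}a_i n_0^{i}$ with $0\le a_i<n_0$ and $k\le\log_{n_0}m$, and bound termwise using $|a_i|\le a_i<n_0$ to get $|m|\le n_0(1+\log_{n_0}m)\,m^{\alpha}$. Replacing $m$ by $m^{N}$ and letting $N\to\infty$ absorbs the factor $n_0(1+\log_{n_0}m^{N})^{1/N}\to 1$, yielding $|m|\le m^{\alpha}$. For the reverse inequality, choose $k$ with $n_0^{k}\le m<n_0^{k+1}$, apply the triangle inequality to $n_0^{k+1}=m+(n_0^{k+1}-m)$ together with the bound just proved, and obtain $|m|\ge n_0^{(k+1)\alpha}\bigl(1-(1-1/n_0)^{\alpha}\bigr)>C\,m^{\alpha}$ with $C>0$ independent of $m$; again passing to $m^{N}$ and $N\to\infty$ removes $C$. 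Thus $|m|=m^{\alpha}=|m|_\infty^{\alpha}$ for all $m\in\N$, hence $|x|=|x|_\infty^{\alpha}$ for all $x\in\Q$ by multiplicativity, so $\val$ is equivalent to the ordinary absolute value.

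In the ultrametric case (b), from $|x+y|^{n}=|(x+y)^{n}|\le\sum_{i}\bigl|\binom{n}{i}\bigr|\,|x|^{i}|y|^{n-i}\le(n+1)\max\{|x|,|y|\}^{n}$ (using that the $\binom{n}{i}$ are integers, so of valuation $\le 1$) and taking $n$-th roots as $n\to\infty$ one gets the strong triangle inequality $|x+y|\le\max\{|x|,|y|\}$ on all of $\K$. Hence $\mathfrak a=\{n\in\Z:|n|<1\}$ is an ideal of $\Z$: additively closed by the strong inequality and absorbing multiplication since $|mn|=|m||n|$. It is proper because $|1|=1$, nonzero because some integer has valuation $\neq 1$ while none exceeds $1$, and prime because $mn\in\mathfrak a$ gives $|m||n|<1$, so $|m|<1$ or $|n|<1$. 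Therefore $\mathfrak a=p\Z$ for a prime $p$; setting $\rho=|p|\in(0,1)$, every integer coprime to $p$ has valuation $1$, and factoring an arbitrary nonzero rational as $x=p^{v}(u/w)$ with $u,w$ coprime to $p$ gives $|x|=\rho^{v}=|x|_p^{\,s}$ with $s=\log(1/\rho)/\log p>0$, so $\val$ is equivalent to the $p$-adic valuation.

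I expect the Archimedean case to be the delicate part: the base-$n_0$ estimate is routine, but the two successive ``replace $m$ by $m^{N}$ and take roots'' limits must be arranged so that the $\log$-factor in one direction and the multiplicative constant in the other genuinely vanish, and one must use the already-established upper bound $|j|\le j^{\alpha}$ inside the lower-bound step. The ultrametric case is comparatively mechanical once the strong triangle inequality has been extracted from the binomial expansion; the only point needing an explicit word is that $\mathfrak a$ is prime, which is immediate from multiplicativity of $\val$.
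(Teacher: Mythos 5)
The paper does not prove this statement: it is quoted verbatim as Theorem 10.1 of Schikhof's book, so there is no in-paper argument to compare against. Your proof is the standard Ostrowski argument (the same one found in the cited reference) and it is correct and complete: the dichotomy is set up properly, the base-$n_0$ estimate together with the two $m\mapsto m^{N}$ limits yields $|m|=m^{\alpha}$ in the Archimedean case, and in the bounded case you correctly extract the strong triangle inequality from the binomial expansion and identify $\{n\in\Z:|n|<1\}$ as a nonzero prime ideal $p\Z$, giving equivalence with the $p$-adic valuation.
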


So, the analysis of non-Archimedean {\em fields} reduces to that of ultrametric 
{\em fields}. But the situation does not need to be the same with 
non-Archimedean {\em normed spaces}. 

There may be something wrong about trying to develop {\em non-ultrametric 
non-Archimedean} analysis, but we have never found a reason to avoid the 
following definition of norm on non-Archimedean linear spaces. 

\begin{definition}\label{Defi}
Let $(\K,\val)$ be a valued field and $X$ a $\K$-linear space. 
A map $\norma:X\to[0,\infty)$ is a norm if, for every $\lambda\in\K$ and 
$x, y\in X$, it fulfils the following: 
\begin{enumerate}
\item $\|x\|=0$ if and only if $x=0$. 
\item $\|\lambda x\|=|\lambda|\|x\|.$
\item $\|x+y\|\leq\|x\|+\|y\|.$
\end{enumerate}
\end{definition}

Actually, Definition~\ref{Defi} appears in \cite[page EVT I.3]{Bourbaki}, 
where the author(s) do(es) not distinguish Archimedean from non-Archimedean 
fields. Moreover, they call {\em ultranorm} what is usally called {\em norm} 
in non-Archimedean analysis, see EVT I.26, Exercise 12 in~\cite{Bourbaki}. 
Maybe now, when $p$-adic differential geometry is undergoing such a rapid 
development (see, e.g, \cite{Fujiwara2018, Scholze}), we can begin considering 
a less restrictive normed structure over non-Archimedean fields. 

\begin{example}[Usual norm]\label{exampleinfty}
Given any valued field $(\K,\val)$ and any $n\in\N$, the map 
$\|\cdot\|_\infty:\K^n\to[0,\infty)$ defined as 
\begin{equation}\label{eqni}
\|(\lambda_1,\ldots,\lambda_n)\|_\infty=\max\{|\lambda_1|,\ldots,|\lambda_n|\} 
\end{equation}
is a norm. Actually, $\ninf$ not only fulfils the condition 
$\|x+y\|_\infty\leq\|x\|_\infty+\|y\|_\infty,$ but also the strong triangle inequality 
$\|x+y\|_\infty\leq\max\{\|x\|_\infty,\|y\|_\infty\}$. 
\end{example}

\begin{example}\label{norma1}
Consider any valued field $(\K,\val)$ and the map 
$\norma_1:\K^n:[0,\infty)$ defined as $\|(a_1\codo a_n)\|_1=|a_1|+\ldots|a_n|.$ 
Then, $\norma_1$ is a norm in the sense of Definition~\ref{Defi}. 
\end{example}

\begin{remark}\label{rabsolute}
From (\ref{Xmax}) it is clear that every {\em ultranorm} $\norma$ defined over 
$\Q^n_p$, fulfils the following property: \\
$\Q^n_p$ can be endowed with a basis $\B=\{x_1,\ldots,x_n\}$ such that, taking 
coordinates with respect to $\B$ one has that 
\begin{equation}\label{eqabsolute}
|\alpha_1|=|\beta_1|,\ldots,|\alpha_n|=|\beta_n| \text{\ \ implies\ \ }
\|(\alpha_1,\ldots,\alpha_n)\|=\|(\beta_1,\ldots,\beta_n)\|.
\end{equation}
In Archimedean analysis, a norm that fulfils (\ref{eqabsolute})  with respect to 
the usual basis is called absolute, see~\cite{tanakaR2}, we will say that 
$\norma$ is absolute with respect to $\B$ when (\ref{eqabsolute}) holds. 
\end{remark}

\begin{question}\label{qabsolute}
Is every non-Archimedean norm absolute with respect to some basis? 
\end{question}

\section{The main result}

\begin{nota}
In the sequel, we will use the word {\em isometry} with the meaning of 
{\em bijective map that preserves the distance}. 

Also, $\K$ will be a valued field and $\val$ will denote its valuation. 
\end{nota}

We will need the following result in the proof of Proposition~\ref{MU}.  

\begin{lemma}\label{oxy}
Let $x=(x_1,x_2),\ y=(y_1,y_2),\ z=(z_1,z_2)\in\K^2$, all of them 
different. Then, 
$$\|x-y\|_1=\|x-z\|_1+\|y-z\|_1 $$
if and only if $z$ shares one coordinate with $x$ and the other one with $y$, 
i.e., if and only if $x_1-z_1=y_2-z_2=0$ or $y_1-z_1=x_2-z_2=0$. 
\end{lemma}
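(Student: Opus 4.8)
The plan is to reduce this to a one-dimensional statement about the valuation and handle each coordinate separately. Write $a_i = x_i - z_i$ and $b_i = z_i - y_i$ for $i = 1, 2$, so that $x_i - y_i = a_i + b_i$. Then the claimed identity $\|x-y\|_1 = \|x-z\|_1 + \|y-z\|_1$ becomes
\[
|a_1 + b_1| + |a_2 + b_2| = \big(|a_1| + |a_2|\big) + \big(|b_1| + |b_2|\big),
\]
which, since $|a_i + b_i| \le |a_i| + |b_i|$ always holds by the triangle inequality for $\val$, is equivalent to having equality in \emph{both} coordinates simultaneously: $|a_1 + b_1| = |a_1| + |b_1|$ and $|a_2 + b_2| = |a_2| + |b_2|$. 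So everything hinges on a lemma characterising when equality holds in the triangle inequality of a valued field.

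Next I would establish that auxiliary fact: for $a, b$ in a valued field $(\K, \val)$, one has $|a+b| = |a| + |b|$ if and only if $a = 0$ or $b = 0$. The ``if'' direction is trivial. For the ``only if'' direction, suppose $a, b \ne 0$; dividing through by $|b|$ and setting $t = a/b$, equality becomes $|t + 1| = |t| + 1$ with $t \ne 0$. This is where the non-Archimedean hypothesis would enter if we wanted to invoke it, but in fact we do not need it: by Definition~\ref{defNAF} we could note the map $n \mapsto |n|$ is bounded, yet more directly, \K{} being non-Archimedean is ultrametric by Ostrowski-type reasoning only over $\Q$ — so instead I would argue purely from the multiplicativity and the triangle inequality that $|t+1| = |t| + 1$ forces a contradiction. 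Actually the cleanest route: the excerpt's intended setting makes $\val$ non-Archimedean, hence ultrametric (the paragraph after Theorem~\ref{Ostrowski} asserts every non-Archimedean valuation is ultrametric), so $|t + 1| \le \max\{|t|, 1\} \le |t| + 1$ with the first inequality strict unless $|t| = 1$ and even then $|t+1| \le 1 < 2 = |t| + 1$; in every case $|t+1| < |t| + 1$, a contradiction. Thus equality in the triangle inequality of \K{} happens only in the degenerate case where one summand vanishes.

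Combining these: $\|x-y\|_1 = \|x-z\|_1 + \|y-z\|_1$ holds iff for each $i \in \{1,2\}$ we have $a_i = 0$ or $b_i = 0$, i.e. $x_i = z_i$ or $z_i = y_i$. This gives four cases: $(x_1 = z_1 \text{ or } z_1 = y_1)$ and $(x_2 = z_2 \text{ or } z_2 = y_2)$. I would then discard the degenerate combinations using the hypothesis that $x, y, z$ are pairwise distinct: if $x_1 = z_1$ and $x_2 = z_2$ then $x = z$, excluded; similarly $z_1 = y_1$ and $z_2 = y_2$ gives $z = y$, excluded. The surviving cases are exactly $x_1 = z_1$ and $z_2 = y_2$, or $z_1 = y_1$ and $x_2 = z_2$ — which is precisely the stated conclusion $x_1 - z_1 = y_2 - z_2 = 0$ or $y_1 - z_1 = x_2 - z_2 = 0$.

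The only genuinely delicate point is the equality case in the triangle inequality for the valuation, and the subtlety there is making sure we are entitled to use ultrametricity: the paper works with valued fields that need not a priori be non-Archimedean, so I would state the sub-lemma for general valued fields but prove it using only multiplicativity and subadditivity, or — if that turns out to require care in the Archimedean case (where e.g. $|1+1| = 2 = |1|+|1|$ over $\R$!) — restrict the hypothesis to non-Archimedean \K{} and invoke ultrametricity as above. Given that Lemma~\ref{oxy} is invoked in the proof of Proposition~\ref{MU} about spaces over a ``valued field'' in the Mazur–Ulam context, I expect the paper intends $\K$ non-Archimedean here, so the ultrametric argument is the right one and the Archimedean counterexample is not a concern. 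Everything else is the routine bookkeeping of the four cases above.
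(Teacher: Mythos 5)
Your proof is correct and follows essentially the same route as the paper's: both reduce the identity to coordinate-wise equality in the triangle inequality of the valuation and use the ultrametric bound $|a+b|\le\max\{|a|,|b|\}<|a|+|b|$ (for $a,b\neq 0$) to force one summand in each coordinate to vanish, after which the hypothesis that $x,y,z$ are pairwise distinct discards the two degenerate combinations. Your concern about whether ultrametricity of $\val$ may be invoked is resolved the same way as in the paper, whose proof also explicitly appeals to the ultrametric inequality of the valuation even though the surrounding statements only say ``valued field''.
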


\begin{proof}
As $\norma_1$ is translation invariant, we may suppose $z=(0,0)$ and 
denote $x=(x_1,x_2),y=(y_1,y_2)$. 
For $(x_1,x_2),(y_1,y_2)\in\K^2\setminus\{(0,0)\}$ we have the following: 
\begin{eqnarray}
\|(x_1,x_2)-(y_1,y_2)\|_1&\!\!=&\!\!|x_1-y_1|+|x_2-y_2|\stackrel{(1)}\leq
\max\{|x_1|,|y_1|\}+\max\{|x_2|,|y_2|\} \\
&&\stackrel{(2)}\leq|x_1|+|y_1|+|x_2|+|y_2|=\|(x_1,x_2)\|_1+\|(y_1,y_2)\|_1. \nonumber
\end{eqnarray}
The inequality $\stackrel{(1)}\leq$ holds because the valuation $\val$ fulfils 
the ultrametric inequality. Besides, the inequality $\stackrel{(2)}\leq$ 
is strict unless either $x_1=y_2=0$ or $y_1=x_2=0$, 
observe that any of these pairs of equalities imply that 
$$\|(x_1,x_2)-(y_1,y_2)\|_1=\|(x_1,x_2)\|_1+\|(y_1,y_2)\|_1.$$ 
So, the result holds. 
\end{proof}

\begin{proposition}\label{MU}
Let $(\K,\val)$ be a valued field and consider $\K^2$ endowed 
with $\norma_1$. Given $(\alpha,\beta)\in\K^2$ 
and isometries $\tau_1,\tau_2:(\K,\val)\to(\K,\val)$, the maps 
$\varphi,\psi:(\K^2,\norma_1)\to (\K^2,\norma_1)$ defined as 
$$\varphi(a,b)=(\alpha,\beta)+(\tau_1(a),\tau_2(b)) \text{\ \ and\ \ }
\psi(a,b)=(\alpha,\beta)+(\tau_2(a),\tau_1(b))$$
are isometries and every isometry arises one of these ways. 
\end{proposition}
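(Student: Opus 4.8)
The plan is to show first that the stated maps are genuinely isometries, and then to prove that any isometry $\Phi:(\K^2,\norma_1)\to(\K^2,\norma_1)$ must have the prescribed form. The easy direction is routine: if $\tau_1,\tau_2$ are isometries of $(\K,\val)$ then $\|\varphi(a,b)-\varphi(a',b')\|_1=|\tau_1(a)-\tau_1(a')|+|\tau_2(b)-\tau_2(b')|=|a-a'|+|b-b'|$, and bijectivity is clear; the case of $\psi$ is identical after swapping roles. For the converse, composing with a translation we may assume $\Phi(0,0)=(0,0)$, so it suffices to analyse distance-preserving bijections fixing the origin.

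The heart of the argument is Lemma~\ref{oxy}: for distinct $x,y,z$, one has $\|x-y\|_1=\|x-z\|_1+\|y-z\|_1$ exactly when $z$ agrees with $x$ in one coordinate and with $y$ in the other. This ``betweenness'' relation is metric, hence preserved by $\Phi$. The two coordinate lines through the origin, $L_1=\{(a,0):a\in\K\}$ and $L_2=\{(0,b):b\in\K\}$, can be characterised purely in terms of this relation: a point $w\neq 0$ lies on $L_1\cup L_2$ if and only if $0$ is ``between'' $w$ and some (equivalently, many) other points in the appropriate sense — more precisely, the pair $\{L_1,L_2\}$ is the unique pair of ``metric lines'' through $0$ whose union contains, for every point $p$, a point metrically between $0$ and $p$ realising the decomposition $p=(p_1,0)+(0,p_2)$. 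So I would first argue that $\Phi$ maps $L_1\cup L_2$ onto itself, and then that it either preserves the pair $(L_1,L_2)$ or swaps them — because each $L_i$ is an isometric copy of $(\K,\val)$ and the two are ``glued'' only at $0$, a bijection of the union fixing $0$ must send each line onto one of the two lines.

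In the case $\Phi(L_1)=L_1$, $\Phi(L_2)=L_2$, write $\Phi(a,0)=(\tau_1(a),0)$ and $\Phi(0,b)=(0,\tau_2(b))$; restricting $\Phi$ to each line shows $\tau_1,\tau_2$ are isometries of $(\K,\val)$ fixing $0$. It remains to identify $\Phi(a,b)$ for general $(a,b)$. Here I use that $(a,b)$ is the unique point $z$ with $z_1=a$, $z_2=b$, equivalently the unique point lying simultaneously on the horizontal line through $(a,0)$ and the vertical line through $(0,b)$ — and these two lines are themselves metrically recoverable: the vertical line through $(a,0)$ is exactly the set of points $w$ for which $(a,0)$ is between $0$ and... (one must be a little careful, since betweenness only pins down the decomposition, not the lines as sets). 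The cleaner route: $(a,b)$ is characterised by $\|(a,b)-(a,0)\|_1=|b|$ and $\|(a,b)-(0,b)\|_1=|a|$ together with $\|(a,b)\|_1=|a|+|b|$, i.e.\ $(a,0)$ and $(0,b)$ both lie metrically between $0$ and $(a,b)$. Applying $\Phi$, the point $\Phi(a,b)$ has $(\tau_1(a),0)$ and $(0,\tau_2(b))$ both between $0$ and it, forcing $\Phi(a,b)=(\tau_1(a),\tau_2(b))$ by Lemma~\ref{oxy} applied with this geometry. That gives $\Phi=\varphi$; the swapped case gives $\Phi=\psi$ by the same reasoning with $L_1,L_2$ interchanged.

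The main obstacle I anticipate is making rigorous the step ``$\Phi$ maps $L_1\cup L_2$ onto itself and respects the pair of lines.'' The betweenness relation of Lemma~\ref{oxy} does not by itself single out the two coordinate lines through $0$ in an obviously coordinate-free way, since through a single point there can be many ``segments''; one needs to exploit that at the origin the two coordinate directions are the \emph{only} pair of directions along which the $\norma_1$-``segments'' from $0$ can be simultaneously extended to cover a neighbourhood in the betweenness sense — and to phrase this so that it transfers under $\Phi$ without circularity. An alternative that sidesteps this: directly classify, for each fixed nonzero $v$, the set of $w$ with $\|v-w\|_1+\|w\|_1=\|v\|_1$ (the ``metric segment'' $[0,v]$), observe from Lemma~\ref{oxy} that it is a union of two ``axis-parallel segments'' meeting at the corner point $(v_1,0)$ or $(0,v_2)$, and use the combinatorics of how these segments fit together as $v$ varies to pin down the images of $L_1$ and $L_2$. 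Either way, once the axes are handled the rest is a short deduction from Lemma~\ref{oxy}.
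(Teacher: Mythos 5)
Your overall strategy is the paper's: reduce to an isometry fixing the origin, use the betweenness characterisation of Lemma~\ref{oxy} to show that the two coordinate axes are preserved or swapped, and then pin down the image of a general point. The easy direction is the same computation, and your ``cleaner route'' for the final step (since $(a,0)$ and $(0,b)$ both lie metrically between $0$ and $(a,b)$, their images $(\tau_1(a),0)$ and $(0,\tau_2(b))$ lie between $0$ and $\Phi(a,b)$, and Lemma~\ref{oxy} then forces both coordinates of $\Phi(a,b)$) is correct and essentially equivalent to the paper's argument.

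The genuine gap is the step you flag yourself: you never prove that $\Phi$ maps $L_1\cup L_2$ into itself and respects the pair of lines, and the characterisation you gesture at (``the unique pair of metric lines through $0$ whose union contains\dots'') is both undeveloped and unnecessary. The fix is much more direct, and it is what the paper does: for $a,b'\neq 0$ one has $\|(a,0)-(0,b')\|_1=|a|+|b'|=\|(a,0)\|_1+\|(0,b')\|_1$, i.e.\ the origin lies metrically between $(a,0)$ and $(0,b')$; since $\Phi$ fixes the origin and preserves distances, the origin lies between $\Phi(a,0)$ and $\Phi(0,b')$, and Lemma~\ref{oxy} with $z=(0,0)$ says that one of these two images has vanishing first coordinate and the other vanishing second coordinate. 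In particular both images lie on $L_1\cup L_2$, and the dichotomy is automatically consistent: if some $\Phi(a,0)$ has nonzero first coordinate, then (second alternative of the lemma) every $\Phi(0,b')$ has first coordinate $0$ and hence, being nonzero, nonzero second coordinate; then no $\Phi(a',0)$ can have first coordinate $0$, since such a point has nonzero second coordinate and the pair $\Phi(a',0),\Phi(0,b')$ would satisfy neither alternative of the lemma. So each axis is sent into a single axis; applying the same reasoning to $\Phi^{-1}$ upgrades the inclusions to equalities, and composing with the isometry $(a,b)\mapsto(b,a)$ if necessary reduces you to the case $\Phi(L_i)=L_i$. With that paragraph inserted, your proof is complete and coincides with the paper's.
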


\begin{proof}
Let $\tau_1,\tau_2$ and $(\alpha,\beta)$ be as in the statement. We need to show 
that for every $(a,b),(a',b')\in\K^2$ the maps $\varphi, \psi$ fulfil 
\begin{equation}\label{phipsi}
\|\varphi(a,b)-\varphi(a',b')\|_1=\|(a,b)-(a',b')\|_1=\|\psi(a,b)-\psi(a',b')\|_1.
\end{equation} 
On the one hand, the value of $(\alpha,\beta)$ does not affect any of the values 
that appear in (\ref{phipsi}), so we may suppose that $(\alpha,\beta)=(0,0)$. So, one has: 
\begin{eqnarray}\label{SonIso}
\|\varphi(a,b)-\varphi(a',b')\|_1= 
& & \|(\tau_1(a),\tau_2(b))-(\tau_1(a'),\tau_2(b'))\|_1= \nonumber \\
& & \|(\tau_1(a)-\tau_1(a'),\tau_2(b)-\tau_2(b'))\|_1= \nonumber \\
& & |\tau_1(a)-\tau_1(a')|+|\tau_2(b)-\tau_2(b')|\stackrel{*}= \\
& & |a-a'|+|b-b'|=\|(a,b)-(a',b')\|_1,\nonumber  
\end{eqnarray}
where the equality $\stackrel{*}=$ holds because $\tau_1$ and $\tau_2$ 
are isometries. 
With an analogous argument we see that $\psi$ is also an isometry. 

\medskip

Suppose, on the other hand, that $\phi:\K^2\to\K^2$ is an isometry such that 
$\phi(0,0)=(0,0).$ This readily implies that $\|\phi(a,b)\|_1=\|(a,b)\|_1$ for 
every $(a,b)\in\Q^p_2$. 

Given $(a,0),(0,b')$, as $\phi$ preserves norms and distances, we have 
$$\|\phi(a,0)-\phi(0,b')\|_1=\|\phi(a,0)\|_1+\|\phi(0,b')\|_1.$$
If we denote $(\cl{a},\cl{b})=\phi(a,0)$ and $(\cl{a'},\cl{b'})=\phi(0,b'),$ 
Lemma~\ref{oxy} implies that either $\cl{a}=\cl{b'}=0$ or $\cl{a'}=\cl{b}=0$ hold. 
This means that $\phi$ preserves the {\em horizontal} and {\em vertical} axes, 
in the sense that $\phi(\{(a,b):a=0\})\subseteq\{(a,b):a=0\}$ or 
$\phi(\{(a,b):a=0\})\subseteq\{(a,b):b=0\}$. As $\phi^{-1}$ is also an isometry, 
we obtain that either $\phi(\{(a,b):a=0\})=\{(a,b):a=0\}$ or 
$\phi(\{(a,b):a=0\})=\{(a,b):b=0\}$.
It is obvious that the map $(a,b)\mapsto(b,a)$ is an isometry, composing 
if necessary with this map we may suppose that $\phi$ maps each axis 
onto itself. So, there are $\tau_1,\tau_2$ such that 
$$\phi(a,0)=(\tau_1(a),0)\text{\ \ and\ \ }\phi(0,b)=(0,\tau_2(b)).$$
It is clear that both $\tau_1, \tau_2$ must be isometries, so we only need to 
show that $\phi(a,b)=(\tau_1(a),\tau_2(b))$ for every $(a,b)$. This is immediate 
from Lemma~\ref{oxy} because $(0,0)$ and $(\tau_1(a),\tau_2(b))$ are the 
only points that share one coordinate with $(\tau_1(a),0)$ and the other one 
with $(0,\tau_2(b))$, so we have finished the case $\phi(0,0)=(0,0).$ 

The general case is straighforward now, we only need to apply the previous 
case to the isometry given by the composition 
$$(a,b)\mapsto\phi(a,b)\mapsto(\phi(a,b)-\phi(0,0)).$$
\end{proof}

We will heavily use this Lemma in the proof Theorem~\ref{Kn1}. 

\begin{lemma}\label{ndimensional}
Let $(\K,\val)$ be a valued field and consider $(\K^n,\norma_1)$, 
$x=(x_1\codo x_n),$ $y=(y_1\codo y_n),$ $z=(z_1\codo z_n)\in \K^n$. Then, 
\begin{equation}\label{nxyz}
\|x-y\|_1=\|x-z\|_1+\|y-z\|_1
\end{equation}
if and only if $z_i\in\{x_i,y_i\}$ for every $i=1\codo n$. 
\end{lemma}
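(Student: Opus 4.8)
The plan is to run the same argument as in the proof of Lemma~\ref{oxy}, now with $n$ summands instead of two. Since $\norma_1$ is translation invariant, I would first replace $(x,y,z)$ by $(x-z,\,y-z,\,0)$, so that it suffices to prove that
$$\|x-y\|_1=\|x\|_1+\|y\|_1$$
holds if and only if $x_i=0$ or $y_i=0$ for every $i=1\codo n$; undoing the translation, the latter is exactly the condition $z_i\in\{x_i,y_i\}$ for every $i$.

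The computation I would then carry out is the chain
$$\|x-y\|_1=\sum_{i=1}^n|x_i-y_i|\ \leq\ \sum_{i=1}^n\max\{|x_i|,|y_i|\}\ \leq\ \sum_{i=1}^n\bigl(|x_i|+|y_i|\bigr)=\|x\|_1+\|y\|_1,$$
where the first inequality is the ultrametric inequality of $\val$ applied in each coordinate (exactly as in the proof of Lemma~\ref{oxy}), and the second one uses $\max\{s,t\}\leq s+t$ for $s,t\geq0$.

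For the ``only if'' direction I would invoke the elementary fact that if $a_i\leq b_i$ for all $i$ and $\sum_i a_i=\sum_i b_i$, then $a_i=b_i$ for every $i$; applied to the second inequality above this forces $\max\{|x_i|,|y_i|\}=|x_i|+|y_i|$, i.e. $\min\{|x_i|,|y_i|\}=0$, i.e. $x_i=0$ or $y_i=0$, for each $i$. For the ``if'' direction I would argue directly: if for a given $i$ one of $x_i,y_i$ vanishes then $|x_i-y_i|=|x_i|+|y_i|$ for that $i$, and summing over $i$ yields the equality (this half needs no hypothesis on $\val$ at all).

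I do not expect a real obstacle here; the one point worth a careful remark is that, unlike Lemma~\ref{oxy}, this statement carries no non-degeneracy assumption on $x,y,z$, but that causes no trouble since the membership condition $z_i\in\{x_i,y_i\}$ already absorbs every coincidence of coordinates and the chain above is insensitive to such coincidences. An alternative I considered is induction on $n$: split off the last coordinate and observe that $\|x-z\|_1+\|y-z\|_1-\|x-y\|_1$ is the sum of a nonnegative $(n-1)$-dimensional defect and a nonnegative one-dimensional one, so each must vanish, which reduces the step to the induction hypothesis together with Lemma~\ref{oxy} (or its one-dimensional analogue). The one-line direct computation above seems preferable, so that is the route I would write up.
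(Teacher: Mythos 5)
Your proof is correct and takes essentially the same route as the paper's: the same translation-invariance reduction (you centre at $z$, the paper at $y$), followed by the coordinatewise chain $|x_i-y_i|\leq\max\{|x_i|,|y_i|\}\leq|x_i|+|y_i|$ and the observation that equality of the sums forces termwise equality, hence $x_i=0$ or $y_i=0$ in each coordinate. As in the paper's own argument, the first inequality of that chain relies on the valuation being ultrametric, which is the implicit standing hypothesis here.
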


\begin{proof}
We may suppose $y=0,$ so (\ref{nxyz}) is equivalent to 
\begin{equation}\label{nxz}
\|x\|_1=\|x-z\|_1+\|z\|_1
\end{equation}
and we need to show that (\ref{nxz}) holds if and only if $z_i\in\{0,x_i\}$ 
for every $i=1\codo n$. 

Suppose that there is some $j$ for which $z_j\not\in\{0,x_j\}$. Then, one has 
$$|x_j|\leq\max\{|x_j-z_j|,|z_j|\}<|x_j-z_j|+|z_j|.$$
Taking into account that $\|(x_1\codo x_n)\|_1=|x_1|+\ldots+|x_n|$ we 
readily obtain that $\|x\|_1<\|x-z\|_1+\|z\|_1$. 

On the other hand, if every $z_i$ equals either 0 or $x_i$ then it is clear that 
$\|x\|_1=\|x-z\|_1+\|z\|_1$ because at each coordinate one has either 
$|x_i|=|x_i-x_i|+|x_i|$ or $|x_i|=|x_i-0|+|0|$. 
\end{proof}

\begin{theorem}\label{Kn1}
Let $(\K,\val)$ be a valued field and consider $(\K^n,\norma_1)$. 
If $\sigma\in S_n$ is a permutation and $\tau_1\codo \tau_n:\K\to\K$ 
are isometries, then $\phi:\K^n\to\K^n$, defined as 
\begin{equation}\label{FormaGeneral}
\phi(a_1,\ldots,a_n)=(\tau_1(a_{\sigma(1)}),\ldots, \tau_n(a_{\sigma(n)}))
\end{equation}
is a centred isometry and every centred isometry arises this way. 
\end{theorem}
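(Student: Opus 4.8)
The plan is to mimic the structure of the proof of Proposition~\ref{MU}, upgrading the two-dimensional combinatorics to $n$ dimensions via Lemma~\ref{ndimensional}. The ``easy'' direction --- that every map $\phi$ of the form~\eqref{FormaGeneral} is a centred isometry --- is a direct computation: writing out $\|\phi(a)-\phi(a')\|_1=\sum_{i=1}^n|\tau_i(a_{\sigma(i)})-\tau_i(a'_{\sigma(i)})|$ and using that each $\tau_i$ is an isometry of $(\K,\val)$ collapses this to $\sum_{i=1}^n|a_{\sigma(i)}-a'_{\sigma(i)}|=\sum_{j=1}^n|a_j-a'_j|=\|a-a'\|_1$, since $\sigma$ just permutes the summands; and $\phi(0)=0$ because each $\tau_i$ fixes $0$ (an isometry of $\K$ fixing nothing in particular still sends $0$ to something, but here ``centred'' should be read as $\phi(0)=0$, which forces $\tau_i(0)=0$, so one records that the $\tau_i$ in the representation are centred isometries of $\K$).

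For the converse, suppose $\phi:\K^n\to\K^n$ is an isometry with $\phi(0)=0$; then $\|\phi(x)\|_1=\|x\|_1$ for all $x$. The key geometric input is to identify, intrinsically and metrically, the $n$ coordinate axes $L_k=\{x:x_i=0\text{ for }i\neq k\}$. First I would show that $\phi$ permutes the axes: fix $k\neq\ell$ and points $x\in L_k\setminus\{0\}$, $y\in L_\ell\setminus\{0\}$; since $\phi$ preserves norms and $0$, we get $\|\phi(x)-\phi(y)\|_1=\|\phi(x)\|_1+\|\phi(y)\|_1=\|x\|_1+\|y\|_1=\|x-y\|_1$ (the last equality because $x,y$ have disjoint supports), hence by Lemma~\ref{ndimensional} applied with $z=0$, the point $0$ lies ``between'' $\phi(x)$ and $\phi(y)$, which translates into: for every coordinate $i$, either $\phi(x)_i=0$ or $\phi(y)_i=0$. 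In other words the supports of $\phi(x)$ and $\phi(y)$ are disjoint whenever $x,y$ lie on distinct axes.

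From this disjoint-support property I would deduce that each $\phi(L_k)$ is contained in a single coordinate axis: points on $L_k$ pairwise share the axis $L_k$, so I apply the betweenness/Lemma~\ref{ndimensional} argument among three collinear points $x,x',x''$ of $L_k$ with, say, $x''$ between $x$ and $x'$ --- such triples exist in abundance since $L_k\cong(\K,\val)$ --- to force the nonzero coordinates of $\phi(x),\phi(x'),\phi(x'')$ to occupy the same single index; a short argument using that $\phi(L_k)$ is connected-in-this-combinatorial-sense plus the disjointness across axes pins down a map $k\mapsto\sigma^{-1}(k)$ assigning to $L_k$ the target axis $L_{\sigma^{-1}(k)}$, and since $\phi$ is a bijection (as is $\phi^{-1}$), this $k\mapsto\sigma^{-1}(k)$ is a permutation; call its inverse $\sigma$. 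Restricting $\phi$ to $L_{\sigma(i)}\to L_i$ gives, after the obvious identification with $\K$, a centred isometry $\tau_i:\K\to\K$, so $\phi(a_{\sigma(i)}e_{\sigma(i)})=\tau_i(a_{\sigma(i)})e_i$. Finally, to recover the value of $\phi$ at an arbitrary point $a=(a_1,\ldots,a_n)$: the point $a$ is characterised among all points of $\K^n$ by the property that it shares coordinate $j$ with the axis point $a_je_j$ for every $j$ --- more precisely, $a$ is the \emph{unique} point $w$ such that $0$ lies between $w$ and each $a_je_j$ in the sense ``$w_i\in\{0,(a_je_j)_i\}$ for all $i$'' jointly forces $w=a$; dually, $\phi(a)$ must satisfy the same relations with respect to the points $\tau_i(a_{\sigma(i)})e_i$, and Lemma~\ref{ndimensional} shows the only such point is $(\tau_1(a_{\sigma(1)}),\ldots,\tau_n(a_{\sigma(n)}))$, which is exactly~\eqref{FormaGeneral}. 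The general (non-centred) case follows, as in Proposition~\ref{MU}, by pre-composing with the translation sending $\phi(0)$ to $0$.

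The main obstacle is the middle step: going from ``$\phi$ sends each axis into the union of the axes with the cross-axis disjoint-support condition'' to ``$\phi$ sends each axis \emph{into a single} axis.'' In dimension $2$ this was automatic because there are only two axes and disjointness of supports already pins everything down; in dimension $n\geq 3$ one genuinely needs an argument that the image of a one-parameter family $L_k$ cannot ``spread'' across two different target axes. The clean way is to take three distinct points $x,y,z\in L_k$ with $z$ metrically between $x$ and $y$ (easy to arrange: $z=\lambda x$ for a suitable scalar, or just note $\|x-y\|_1=\|x-z\|_1+\|z-y\|_1$ holds in $L_k$ for appropriate choices), apply Lemma~\ref{ndimensional} to $\phi(x),\phi(y),\phi(z)$ to get $\phi(z)_i\in\{\phi(x)_i,\phi(y)_i\}$ for every $i$, and combine with the already-known fact that each of $\phi(x),\phi(y),\phi(z)$ is supported on a single coordinate (which itself follows by applying the disjoint-support conclusion against a fixed point of a different axis); a brief case-check then shows all three single supports coincide, and letting $x,y$ range over $L_k$ shows the support index is constant on $\phi(L_k)$. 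Once this is in hand, everything else is the $n$-dimensional echo of the two-dimensional argument already given for Proposition~\ref{MU}.
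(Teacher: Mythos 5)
Your overall strategy is the same as the paper's (use Lemma~\ref{ndimensional} to show that $\phi$ permutes the coordinate axes, extract $\sigma$ and the $\tau_i$, then recover $\phi$ at general points), but the step you yourself single out as the main obstacle --- that $\phi(L_k)$ lies in a \emph{single} target axis --- is carried out with an argument that cannot work. You propose to take three distinct points $x,z,y\in L_k$ with $z$ metrically between $x$ and $y$, i.e.\ $\|x-y\|_1=\|x-z\|_1+\|z-y\|_1$, and assert such triples are ``easy to arrange.'' They do not exist: on $L_k\cong(\K,\val)$ the valuation is ultrametric, so for distinct $a,b,c\in\K$ one has $|a-b|\leq\max\{|a-c|,|c-b|\}<|a-c|+|c-b|$. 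Equivalently, Lemma~\ref{ndimensional} itself, applied to three points of a single axis (which differ in only one coordinate), says the only points between $x$ and $y$ are $x$ and $y$ themselves. So the collinear-triple device is vacuous and the ``spreading'' problem you correctly identified for $n\geq 3$ remains open in your write-up.

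The gap is repairable, and the paper closes it differently: for distinct nonzero $x=ae_k$, $y=be_k$ on the same axis, Lemma~\ref{ndimensional} shows that the set of points $z$ with $\|x-y\|_1=\|x-z\|_1+\|y-z\|_1$ has exactly $2^m$ elements, where $m$ is the number of coordinates in which $x$ and $y$ differ; here $m=1$, so there are exactly two such points, and since $\phi$ is a bijective isometry the same count holds for $\phi(x),\phi(y)$, forcing them to differ in exactly one coordinate. Two points each supported on a single, but distinct, coordinate would differ in two coordinates, so $\phi(x)$ and $\phi(y)$ lie on the same axis. (Even more directly: if $\phi(x)=ce_i$ and $\phi(y)=de_j$ with $i\neq j$, then $\|\phi(x)-\phi(y)\|_1=|c|+|d|=\|x\|_1+\|y\|_1>|a-b|=\|x-y\|_1$, contradicting that $\phi$ is an isometry.) Two smaller defects: testing disjointness of supports against \emph{one} point of a different axis does not force $\phi(x)$ to be supported on a single coordinate when $n\geq 3$ --- you need one point on each of the other $n-1$ axes, or the paper's count of $2^k$ betweenness points --- and your final uniqueness claim for $\phi(a)$ is false as stated: the betweenness relations against the points $\tau_j(a_j)e_j$ leave the coordinates of $\phi(a)$ outside the support of $a$ unconstrained, and you must add $\|\phi(a)\|_1=\|a\|_1$ (or the paper's inductive count of nonzero coordinates) to conclude they vanish.
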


\begin{proof}
It is clear that for every $\sigma\in S_n$ the map 
$\psi(a_1,\ldots,a_n)=(a_{\sigma(1)}\codo a_{\sigma(n)})$ 
is an isometry and repeating the computations (\ref{SonIso}) we obtain that 
$\varphi(a_1,\ldots,a_n)=(\tau_1(a_1)\codo\tau_n(a_n))$ is also an onto 
isometry provided that $\tau_1\codo \tau_n$ are isometries too. 
As the map $\phi$ given by (\ref{FormaGeneral}) is the composition of 
two isometries, it must be an isometry too. 

On the other hand, let $\phi:\K^n\to\K^n$ be an isometry. We will denote 
$\B=\{e_1\codo e_n\}$ the usual basis of $\K^n$. Consider $a\in\K$, 
$x=(a,0\codo 0)=ae_1$ and $\tau(x)$. Applying Lemma~\ref{ndimensional} to $x$ 
and $y=0$ implies that the only $z$ that fulfil 
$\|x\|_1=\|x-z\|_1+\|z\|_1$ 
are $z=0$ and $z=x$. As $\phi$ is an isometry, the only $z'$ that fulfil 
$\|\tau(x)\|_1=\|\tau(x)-z'\|_1+\|z'\|_1$ 
are $z'=\tau(0)=0$ and $z'=\tau(x)$. Now, Lemma~\ref{ndimensional} implies that 
$n-1$ coordinates of $\tau(x)$ are 0. So, we have again that $\phi$ preserves the 
union of all the axes. For $b\in\K\setminus\{0,a\}$ and $y=(b,0\codo 0)$ we may 
apply again Lemma~\ref{ndimensional} to obtain that $\tau(x)$ and $\tau(y)$ share 
exactly $n-1$ coordinates, so they must agree in the $n-1$ null coordinates that 
each of them has. This means that $\phi$ maps each axis into another axis: for 
every $i$ there exists $j$ such that 
\begin{equation}\label{ij}
\{\phi(\lambda e_i):\lambda\in\K\}\subset\{\lambda e_j:\lambda\in\K\}. 
\end{equation}
By symmetry, all of these inclusions are equalities, so there exists $\sigma\in S_n$ 
such that 
\begin{equation}\label{eqij}
\{\phi(\lambda e_i):\lambda\in\K\}=\{\lambda e_{\sigma(i)}:\lambda\in\K\} 
\text{ for every }i=1\codo n. 
\end{equation}
Composing if necessary with 
$\psi(a_1,\ldots,a_n)=(a_{\sigma^{-1}(1)}\codo a_{\sigma^{-1}(n)})$ 
we may suppose that $\sigma=\Id$, i.e., that every axis is invariant for $\phi$. 
This means that there exist isometries 
$\tau_1\codo \tau_n:\K\to\K$ such that $\phi(ae_i)=\tau_i(a)e_i$ for every 
$a\in\K,\ i=1\codo n$. 
We need to show that (\ref{FormaGeneral}) holds; as we are assuming that 
$\sigma=\Id$ we need to prove that 
\begin{equation}\label{ii}
\phi(a_1\codo a_n)=(\tau_1(a_1)\codo\tau_n(a_n)) \text{ for every }a_1\codo a_n\in\K. 
\end{equation}

In order to prove this we will use an inductive reasoning, take 
$x=(a_1\codo a_n)\in\K^n$ and let $k$ denote the number of nonzero coordinates 
of $x$. We have already seen that (\ref{ii}) holds when $k=1$, so suppose 
$1<k\leq n$ and that (\ref{ii}) is true for every vector with less that $k$ 
nonzero coordinates. 

As in (\ref{nxz}), applying Lemma~\ref{ndimensional} with $y=0$ we have 
\begin{equation}\label{xz}
\|x\|_1=\|x-z\|_1+\|z\|_1
\end{equation}
if and only if $z_i\in\{0,x_i\}$ for every $i$. 
As $n-k$ coordinates of $x$ are $0$, there exist exactly $2^k$ vectors in 
$\K^n$ that fulfil (\ref{xz}), it is clear that this implies that 
$\tau(x)$ has $k$ nonzero coordinates, too. Except for $x$, all the points 
that fulfil (\ref{xz}) have at most $k-1$ nonzero coordinates, so each of 
them fulfils (\ref{ii}). Let $i\in\{1\codo n\}$ such that $a_i\neq 0$. 
In particular, if we consider 
$$
z=a_ie_i\text{ and }z'=x-z=\displaystyle\sum_{j\neq i}a_je_j,
$$ 
then we have 
$$
\tau(z)=\tau_i(a_i)e_i,\ \tau(z')=\displaystyle\sum_{j\neq i}\tau_j(a_j)e_j. 
$$ 
As $\phi$ is a centred isometry and $z, z'$ fulfil (\ref{xz}) one has 
\begin{equation}\label{txz}
\|\tau(x)\|_1=\|\tau(x)-\tau(z)\|_1+\|\tau(z)\|_1;\quad 
\|\tau(x)\|_1=\|\tau(x)-\tau(z')\|_1+\|\tau(z')\|_1. 
\end{equation}
Applying again Lemma~\ref{ndimensional} we obtain that every nonzero coordinate 
of $\tau(z)$ or $\tau(z')$ agrees with the same coordinate of $\tau(x)$. 
Denoting $\tau(x)=b_1e_1+\cdots+b_ne_n$ we obtain $b_j=\tau_j(a_j)$ whenever 
$a_j\neq 0$. As we have noted before, the amount of nonzero coordinates of 
$\tau(x)$ must be $k$, so we are done. 
\end{proof}

\section{Concluding remarks}

\begin{remark}
There is a clear analogy between strict convexity of a normed real linear 
space and something that happens with $\norma_1$ in $\K^2$. If three points 
$a,b,c$ in a real, strictly convex, normed space fulfil 
$\|c-a\|=\|c-b\|+\|b-a\|$ then $b$ belongs to the segment whose endpoints 
are $a$ and $c$. Moreover, the distances $\|c-b\|$, $\|b-a\|$ determine $b$. 

If $a,b,c\in \K^2$ fulfil $\|c-a\|_1=\|c-b\|_1+\|b-a\|_1$ then $b$ shares 
some coordinate with $a$ and the other one with $c$. Moreover, if 
$|a_1-c_1|\neq|a_2-c_2|$ then $b$ is uniquely determined by $\|c-b\|_1$, $\|b-a\|_1$. 
Besides, $b$ is the point where the map $\K^2\to\R,$ $x\mapsto \|c-x\|_1+\|x-a\|_1$ 
attains its minimum, this may have some interest in non-Archimedean optimization, 
the interested reader may see~\cite{SHAMSEDDINE2001, SHAMSEDDINE200381}.

Also, in~\cite{MoslehianSadeghi} one can find a definition of 
non-Archimedean strictly convex space at the beginning of page 3406. 
Unfortunately, the conditions in that definition were too restrictive, as can 
be seen in~\cite{JCSJNGNLA}. It could be interesting to try another approach 
to strict convexity in non-Archimedean spaces based in this analogy. 
\end{remark}

\begin{remark}
There is no way to adapt Proposition~\ref{MU} to ultrametric normed spaces. 
Indeed, if $(E,\nE)$ is ultrametric and $\|e_0\|<\|v_0\|$ for some $e_0,v_0\in E$, 
then the map $T:E\to E$ defined as $T(v)=v$ if $\|v\|\neq\|v_0\|$, $T(v)=v+e_0$ 
if $\|v\|=\|v_0\|$ is an isometry and $T$ does not preserve axis or anything 
related. The fact that $T$ is an isometry can be seen in the proof of 
Proposition~1 in~\cite{Kubzdela}. A more detailed description of isometries 
between ultranormed spaces can be found in~\cite{JCSJNGIso}. 
\end{remark}

\begin{remark}
There is no way to improve Proposition~\ref{MU}, at least in spaces endowed 
with absolute norms. It is clear that any norm that fulfils~(\ref{eqabsolute}) is 
going to have at least the {\em axial isometries} described in Proposition~\ref{MU}. 
Anyway, this is not endemic to non-Archimedean spaces. It is true that every 
isometry between real Banach spaces is affine, but it is so because every 
isometry $\R\to\R$ is affine, too. Indeed, it is a well-known fact that $\R$-linear 
isometries between complex Banach spaces do not need to be $\C$-linear. 
Moreover, there exist isometric $\C$-linear spaces that are nor even 
$\C$-isomorphic (\cite{Bourgain}). 
\end{remark}

\section*{Acknowledgements}
The first author has been partially supported by DGICYT project PID2019-103961GB-C21 (Spain).

The first author wants to express his gratitude to José Navarro Garmendia and 
Juan B.~Sancho de Salas for many fruitful discussions on this topic. 

\bibliographystyle{abbrv}
\bibliography{NUNA}{}

\begin{thebibliography}{10}

\bibitem{Banakh2}
T.~Banakh.
\newblock Every 2-dimensional {B}anach space has the {M}azur--{U}lam property.
\newblock {\em Preprint, {\em https://arxiv.org/abs/2103.09268}}, 2021.

\bibitem{TBJCS}
T.~Banakh and J.~Cabello~S{\'a}nchez.
\newblock Every non-smooth 2-dimensional {B}anach space has the {M}azur--{U}lam
  property.
\newblock {\em Linear Algebra and its Applications}, 625:1--19, 2021.

\bibitem{Bourbaki}
N.~Bourbaki.
\newblock {\em Espaces vectoriels topologiques sur un corps valu{\'e}}, pages
  1--29.
\newblock Springer Berlin Heidelberg, Berlin, Heidelberg, 2007.

\bibitem{Bourgain}
J.~Bourgain.
\newblock Real isomorphic complex {B}anach spaces need not be complex
  isomorphic.
\newblock {\em Proceedings of the American Mathematical Society},
  96(2):221--226, 1986.

\bibitem{JCSJNGIso}
J.~Cabello~S{\'a}nchez and J.~Navarro~Garmendia.
\newblock Isometries of ultrametric normed spaces.
\newblock {\em Annals of Functional Analysis, {\em to appear}}, 2021.

\bibitem{JCSJNGNLA}
J.~Cabello~S{\'a}nchez and J.~Navarro~Garmendia.
\newblock A precision on the concept of strict convexity in non-{A}rchimedean
  analysis.
\newblock {\em Nonlinear Analysis}, 207:112304, 2021.

\bibitem{Fujiwara2018}
K.~Fujiwara and F.~Kato.
\newblock {\em Foundations of rigid geometry}.
\newblock European Mathematical Society, 2018.

\bibitem{iwahori}
O.~Goldman and N.~Iwahori.
\newblock The space of $p$-adic norms.
\newblock {\em Acta Math}, 109:137--177, 1963.

\bibitem{Kubzdela}
A.~Kubzdela.
\newblock Isometries, {M}azur--{U}lam theorem and {A}leksandrov problem for
  non-{A}rchimedean normed spaces.
\newblock {\em Nonlinear Analysis: Theory, Methods \& Applications},
  75(4):2060--2068, 2012.

\bibitem{SurveyMSW2}
H.~Martini and K.~J. Swanepoel.
\newblock The geometry of {M}inkowski spaces - {A} survey. {P}art {II}.
\newblock {\em Expositiones Mathematicae}, 22(2):93--144, 2004.

\bibitem{SurveyMSW1}
H.~Martini, K.~J. Swanepoel, and G.~Weiss.
\newblock The geometry of {M}inkowski spaces - {A} survey. {P}art {I}.
\newblock {\em Expositiones Mathematicae}, 19(2):97--142, 2001.

\bibitem{MoslehianSadeghi}
M.~S. Moslehian and G.~Sadeghi.
\newblock A {M}azur–{U}lam theorem in non-{A}rchimedean normed spaces.
\newblock {\em Nonlinear Analysis: Theory, Methods \& Applications},
  69(10):3405--3408, 2008.

\bibitem{schikhoff}
W.~Schikhoff.
\newblock {\em Ultrametric Calculus, an introduction to $p$-adic analysis}.
\newblock Cambridge studies in advanced mathematics, 1984.

\bibitem{Scholze}
P.~Scholze.
\newblock On the $p$-adic cohomology of the {L}ubin-{T}ate tower.
\newblock In {\em Annales Scientifiques De L'Ecole Normale Superieure},
  volume~51, pages 811--863, 2018.

\bibitem{SHAMSEDDINE2001}
K.~Shamseddine and V.~Zeidant.
\newblock One-dimensional optimization on non-{A}rchimedean fields.
\newblock {\em Journal of Nonlinear and Convex Analysis}, 2(3):351--361, 2001.

\bibitem{SHAMSEDDINE200381}
K.~Shamseddine and V.~Zeidant.
\newblock Constrained second order optimization on non-{A}rchimedean fields.
\newblock {\em Indagationes Mathematicae}, 14(1):81--101, 2003.

\bibitem{tanakaR2}
R.~Tanaka.
\newblock {T}ingley's problem on symmetric absolute normalized norms on
  {R}$^2$.
\newblock {\em Acta Mathematica Sinica, English Series}, 30(8):1324--1340,
  2014.

\bibitem{vanRooij}
A.~Van~Rooij.
\newblock {\em Non-Archimedean functional analysis}.
\newblock Dekker, 1978.

\end{thebibliography}

\end{document}